\documentclass[10pt,reqno]{amsart} 

\usepackage{amssymb,latexsym}
\usepackage{cite} 

\usepackage[height=190mm,width=130mm]{geometry} 

\theoremstyle{plain}

\newtheorem{proposition}{Proposition}

\theoremstyle{definition}
\newtheorem{definition}{Definition}

\theoremstyle{remark}

\newcommand{\Z}{\mathbb{Z}}

\newcommand{\vac}{\mathbf {1}}
\newcommand{\delz}{\partial_{z}}



\numberwithin{equation}{section} 


\newcommand{\Tr}{\mathrm{Tr}}

\begin{document}

\title[Foliation of a space associated to vertex operator algebra]  
{Foliation of a space associated to vertex operator algebra}
\author{A. Zuevsky}
\address{Institute of Mathemtics \\ Czech Academy of Sciences\\ Zitna 25, Prague \\ Czech Republic}

\email{zuevsky@yahoo.com}







\begin{abstract}
We construct the foliation of aspace associated to correlation functions of vertex operator algebras on considered on 
Riemann surfaces.  
We prove that the computation of general genus $g$ correlation functions determines a foliation 
on the space associated to these correlation functions a sewn Riemann surface. 
Certain further applications of the definition are proposed. 
\end{abstract}

\keywords{Vertex operator algebras; Riemann surfaces; correlation functions; foliations}
\vskip12pt  

\maketitle

\section{Introduction}
The theory of vertex operator algebras correlation functions considered on Riemann surfaces is a rapidly developing 
field of studies.
 Algebraic nature of methods applied in this field helps to understand and compute  
the structure of vertex operator algebras correlation functions.  
On the other hand, the geometric side of vertex operator algebra correlation functions 
 is in associating their formal parameters with 
local coordinates on a manifold. 
Depending on the geometry, one can obtain various consequences for a vertex operator algebra and its space 
of correlation functions.  
One is able to study the geometry of a manifold using 
 the algebraic structure of a vertex operator algebra defined on it, and, in particular, it is important to consider foliations of 
associated spaces.

In this paper we introduce the formula for an $n$-point function for a vertex operator algebra 
$V$ on a genus $g$ Riemann surface $\mathcal{S}^{(g)}$
 obtained as a result of 
sewing of lower genus surfaces $\mathcal{S}^{(g_1)}$ 
and $\mathcal{S}^{(g_2)}$ of genera $g_1$ and $g_2$, $g=g_1+g_2$. 
Using this formulation, we then introduce the construction of 
a foliation for the  space of correlation functions for vertex operator algebra 
with formal parameters defined on general sewn Riemann surfaces.  
Computations of a vertex operator algebra correlation functions allow us to define 
foliation of the space associated to a vertex operator algebra with formal parameter associated to a local 
coordinate on a genus $g$ Riemann surface sewn from lower genus Riemann surfaces. 
Such foliations are important both for studies of the space of correlation function 
for a vertex operator algebras and possibly 
for studies of smooth manifolds in the frames of Losik's approach \cite{LosikArxiv}. 
\section{Vertex operator algebras}
First, we recall the definitions of the standard formal series 
\begin{eqnarray}
\delta\left(\frac{x}{y}\right)&=&\sum_{n\in \mathbb{Z}} x^{n}y^{-n},
\label{delta}\\
(x+y)^{\kappa}&=&\sum_{m\ge 0}\binom{\kappa}{m}x^{\kappa-m}y^m,
\label{xykappa}
\end{eqnarray} 
for any formal variables $x,y,\kappa$ where $\binom{\kappa}{m}=\frac{\kappa(\kappa-1)\ldots (\kappa-m+1)}{m!}$.  
A vertex operator algebra \cite{K, FBZ}  
$(V,Y,\mathbf{1},\omega)$ consists of a $\Z$-graded complex vector space 
$V = \bigoplus_{n\in\Z}\,V_{(n)}$ where $\dim V_{(n)}<\infty$ for each $n\in \Z$, 
a linear map $Y:V\rightarrow {\rm End}(V)[[z,z^{-1}]]$ for a formal parameter $z$ and pair 
of distinguished vectors: the vacuum $\mathbf{1}\in V_{(0)}$ and the conformal vector $\omega\in V_{(2)}$. 
For each $v\in V$, the 
image under the map $Y$ is the vertex operator
\begin{align*}
Y(v,z) = \sum_{n\in\Z}v(n)z^{-n-1},
\end{align*}
with {modes} $v(n)\in {\rm End}(V)$, where $Y(v,z)\mathbf{1} = v+O(z)$.

The linear operators (modes) $u(n):V\rightarrow V$ satisfy creativity 
\begin{equation}
Y(u,z)\vac = u +O(z)
\label{create}
\end{equation}
and lower truncation 
\begin{equation}
u(n)v=0,
\label{lowertrun}
\end{equation}
for each $u$, $v\in V$ and $ n\gg 0$. 
Each vertex operator satisfies the translation property 
\begin{equation}
Y(L(-1)u,z)=\delz Y(u,z).  
\label{YL(-1)}
\end{equation}
Finally, the vertex operators satisfy the Jacobi identity
\begin{eqnarray}
\notag
&& z_0^{-1}\delta\left( \frac{z_1 - z_2}{z_0}\right) Y (u, z_1 )Y(v , z_2)    
  - z_0^{-1} \delta\left( \frac{z_2 - z_1}{-z_0}\right) Y(v, z_2) Y(u , z_1 ) 
\\
\label{VOAJac}
&& 
= z_2^{-1}    
\delta\left( \frac{z_1 - z_0}{z_2}\right)
Y \left( Y(u, z_0)v, z_2\right).   
\end{eqnarray} 
 Vertex operators satisfy locality, i.e., for all $u,v\in V$ there exists an integer $k\geq 0$ such that
\begin{align*}
(z_1 - z_2)^k \left[ Y(u,z_1), Y(v,z_2) \right] = 0.
\end{align*}
The vertex operator of the conformal vector $\omega$ is 
\begin{align*}
&Y(\omega,z) = \sum_{n\in\Z}L(n)z^{-n-2},
\end{align*}
where the modes $L(n)$ satisfy the Virasoro algebra with {central charge} $c$
\begin{align*}
&[L(m),L(n)] = (m-n)L(m+n) + \frac{m^3-m}{12}\delta_{m,-n}c\,\mathrm{Id}_V.
\label{eq:Ln}
\end{align*}
We define the homogeneous space of weight $k$ to be 
\begin{align*}
V_{(k)} = \{v\in V | L(0)v = kv\},
\end{align*}
and we write $
(v)=k$ for $v\in V_{(k)}$.
Amongst other properties, these axioms imply  
locality, associativity, commutativity and skew-symmetry:
\begin{eqnarray}
(z_{1}-z_{2})^m
Y(u,z_{1})Y(v,z_{2}) 
&=& (z_{1}-z_{2})^m
Y(v,z_{2})Y(u,z_{1}),
\notag
\\
&&
\label{Local}
\\
(z_{0}+z_{2})^n Y(u,z_{0}+z_{2})Y(v,z_{2})w &=& (z_{0}+z_{2})^n Y(Y(u,z_{0})v,z_{2})w,
\label{Assoc}
\\
u(k)Y(v,z)- Y(v,z)u(k)
&=& \sum_{j\ge 0}\binom{k}{j}
Y(u(j)v,z)z^{k-j},
\label{Comm}\\
Y(u,z)v &=&  e^{zL(-1)}Y(v,-z)u,
\label{skew}
\end{eqnarray}
for $u$, $v$, $w\in V$ and integers $m$, $n\gg 0$  \cite{K}.  

In  \cite{Zhu} Zhu introduced  a second "square-bracket" vertex operator algebra $(V,Y[,],\mathbf{1}
,\tilde{\omega})$ associated to a given vertex operator algebra $(V,Y(,),\mathbf{1},\omega )$.
  The new square bracket vertex operators are defined by a change of parameters, namely 
\begin{equation}
Y[v,z]=\sum_{n\in \mathbb{Z}}v[n]z^{-n-1}=Y(q_{z}^{L(0)}v,q_{z}-1),
\label{Ysquare}
\end{equation}
with $q_{z}=\exp (z)$, while the new conformal vector is 
$\tilde{\omega} =\omega -\frac{c}{24}\mathbf{1}$. 
For $v$ of $L(0)$ weight $wt(v)\in \mathbb{R}$ and $m\geq 0$, 
\begin{eqnarray}
v[m] &=&m!\sum\limits_{i\geq m}c(wt(v),i,m)v(i),  \label{square1} \\
\sum\limits_{m=0}^{i}c(wt(v),i,m)x^{m} &=&\binom{wt(v)-1+x}{i}.
\label{square2}
\end{eqnarray}
In particular we note that $v[0]=\sum\limits_{i\geq 0}\binom{wt(v)-1}{i}v(i)$. 
\subsection{The invariant form}
\label{liza}
The subalgebra $\{L(-1),L(0),L(1)\}\cong SL(2,\mathbb{C})$ associated with M\"{o}bius transformations on 
$z$ naturally acts on a vertex algebra \cite{K}. 
In particular, 
\begin{equation}
\gamma=\left(
\begin{array}{cc}
0 & 1\\
-1 & 0\\
\end{array}
\right), \quad   
 z\mapsto w= - \frac{1}{z},
 \label{eq: gam_lam} 
\end{equation}
is generated by 
\[
T = \exp \left( L(-1) \right) \exp \left( L(1) \right) \exp \left( L(-1) \right),
\]
  where  
\begin{equation}
T \; Y(u,z) \; T^{-1}= 
Y \left(  \left( \exp(-z) L(1) \right) 
\left(   -z \right)^{-2L(0)} u, -z^{-1}  \right).  \label{eq: Y_U} 
\end{equation}
Following \cite{Sche}, 
we therefore define 
\begin{equation}
Y^{\dagger }(u,z)=\sum_{n}u^{\dagger }(n)z^{-n-1}= T \; Y(u,z)\; T^{-1}. \label{eq: adj op} 
\end{equation}
For a quasi-primary vector $u$ (i.e., satisfying the condition $L(1) u=0$) of weight $wt(u)$, we have   
\begin{equation}
u^{\dagger }(n)=(-1)^{n+1}u(2wt(u)-n-2),  \label{eq: adj op qp}
\end{equation}
and 
 $L^{\dagger}(n)=(-1)^{n}L(-n)$.
\begin{definition}
We call a bilinear form $\langle .\ , .\rangle$ on $V$  
invariant if for all $a$, $b$, $u\in V$, \cite{Sche}  
\begin{equation}
\langle Y(u,z)a,b\rangle  = \langle a,Y^{\dagger }(u,z)b\rangle,  
\label{eq: inv bil form}
\end{equation}
i.e., 
\[
 \langle u(n)a, b\rangle= \langle a,u^{\dagger }(n)b\rangle.  
\]
Thus it follows that 
\[
\langle L(0)a,b\rangle =\langle a, L(0)b\rangle, 
\]
 so that 
$\langle a,b\rangle =0$  if $wt(a)\not=wt(b)$ for homogeneous $a$, $b$. One also finds 
$\langle a,b\rangle = \langle b,a \rangle$ \cite{Sche}. 
\end{definition}
The form  $\langle . \ , . \rangle$ is unique up to normalization if $L(1) V_{1}=V_{0}$.  
 We choose the normalization $\langle \mathbf{1}_V ,\mathbf{1}_V \rangle=1$. 
It is non-degenerate if and only if $V$ is simple \cite{L}. 
 Given any $V$ basis $\{ u^{\alpha}\}$
 we define the dual $V$ basis 
$\{ \overline{u}^{\beta}\}$ where $\langle u^{\alpha} ,\overline{u}^{\beta}\rangle_{\lambda}=\delta^{\alpha\beta}$. 

\section{Construction of an $n$-point function at a genus $g$ Riemann surface}
Recall that a conformal field theory defined on a Riemann surface  
$\mathcal{S}^{(g)}$ of genus $g$ \cite{knizka, TZ0, TZ1, TG, TW, MT}
 is determined 
by the set $\left\{Z^{(g)}_V\left(v_1, z_1; \ldots; v_n, z_n; \Omega^{(g)}\right)\right\}$ of all correlation functions 
 for all $n$, and all choices of points $z_i \in \mathcal{S}^{(g)}$, and all choices of elements $v_i$ of corresponding 
vertex operator algebra $V$.  

In this section, extending the genus two results of \cite{TZ0,  TZ1, TG}, we introduce the definition of  
 an $n$-point correlation function 
 for a vertex operator algebra $V$ on a genus $g$ Riemann surface $\mathcal{S}^{(g)}$
 obtained as a result of 
sewing of lower genus surfaces $\mathcal{S}^{(g_1)}$ and $\mathcal{S}^{(g_2)}$ of genera $g_1$ and $g_2$, $g=g_1+g_2$. 
The genus $g$ $n$-point correlation function for $a_1,\ldots,a_L \in V$ 
and $b_1,\ldots,b_R \in V$, $L+R=n$ inserted at $x_1,\ldots,x_L\in {\mathcal{S}}^{(g_1)}$ and 
$y_1,\ldots,y_R\in{\mathcal{S}}^{(g_1)}$, respectively, can be defined by
\begin{eqnarray}
\label{popa}
& Z^{(g)}_V\left(a_1,x_1;\ldots;a_L,x_L|b_1,y_1;\ldots;b_R,y_R;
\Omega_1^{(g_1)},\Omega_2^{(g_2)}, \epsilon, z_1, z_2\right) 
\notag
\\
 & = \sum_{n \ge 0} \sum_{u\in V} \epsilon^n Z^{(g_1)}_V\left(Y(a_1,x_1)\ldots Y(a_L,x_L)u;\Omega_1^{(g_1)}, z_1\right) 
\\
& \qquad \cdot Z^{(g_2)}_V\left(Y(b_R,y_R)\ldots Y(b_1,y_1)\overline{u};\Omega_2^{(g_2)}, z_2\right). 
\end{eqnarray}
Note that this construction of the correlation functions depends on the choice of insertion points 
$z_1 \in {\mathcal{S}}^{(g_1)}$ and $z_2\in {\mathcal{S}}^{(g_2)}$ of the $\epsilon$-sewing construction \cite{Y}.   
Here $\epsilon$ is the sewing complex parameter \cite{Y} and  $\Omega_i^{(g_i)}$, $i=1, 2$ 
are period matrices for Riemann surfaces
 ${\mathcal{S}}^{(g_i)}$.
To avoid misunderstanding, we say here, that the lower genus correlation functions 
$Z^{(g_1)}_V\left(Y(a_1,x_1)\ldots Y(a_L,x_L)u;\; \Omega_1^{(g_1)}, z_1\right)$ and  
 $Z^{(g_2)}_V\left(Y(b_R,y_R)\ldots\right.$  $\left. Y(b_1,y_1)\overline{u};\;  \Omega_2^{(g_2)}, z_2\right)$
are supposed to be know for given fixed $g_1$, $g_2$, or obtained via \eqref{popa} recursively, 
from known lower genus correlation functions (e.g., starting from the partition functions, or torus 
correlation functions). 
The explicit dependence on $\Omega^{(g_i)}$, $i=1$, $2$ is assumed. 
Not all vertex operator algebras admit  
 the notion of dual states. 
Thus we assume that there exists non-degenerate bilinear form on $V$ \cite{K}. 
On the right hand side the form is present in the definition of 
the dual state $\overline{u}$ with respect to such form.
 
%
There exists an algebraic procedure \cite{Zhu} relating 
 $n$-point correlation functions to a sum of $(n-1)$-point functions for vertex operator algebras 
on the torus. In \cite{Zhu} we find that 
the genus one $n$-point correlation functions obey
\begin{align*}
&Z^{(1)}_V(v_1, z_1; \ldots; v_n, z_n;\tau)
\notag
\\
&= \Tr_V \left( (v_1)_{wt(v_1) - 1} Y(q_{z_2}^{L(0)}v_2,q_{z_2})\ldots Y(q_{z_n}^{L(0)}v_n,q_{z_n}) q^{L(0)-c/24} \right)
\notag
 \\
&+ \sum_{k=2}^n \sum_{j\geq0} P_{1+j}(z_1-z_k,\tau) Z^{(1)}_V(v_2,z_2;\ldots; v_1[j]v_k,z_k;\ldots;v_n,z_n;\tau), 
\end{align*}
where $P_{l}(z,\tau)$ are Weierstrass functions \cite{L}, and square bracket on the right hand side denotes 
the deformed mode for vertex operator algebra on the torus \cite{Zhu}. 

A generalization \cite{TW} of the recursion procedure of \cite{Zhu} 
allows us to reduce a genus $g=g_1+g_2$
 $n$-point correlation function to the genus $g$ zero-point correlation function
 (the partition function) 
 on any higher genus Riemann surface 
 formed from two lower genus $g_1$, $g_2$ Riemann surfaces 
in the $\epsilon$ sewing procedure \cite{Y}.    
 Using explicit results of \cite{TZ, TW} for the Heisenberg vertex operator algebra in the Schottky formation of 
a genus $g$ Riemann surface, we conjecture the following general form of the 
reduction formula for a general vertex operator algebra 
considered on a genus $g$ Riemann surface 
obtained as a results of sewing of two lower genua $g_1$ and $g_2$ Riemann surfaces: 
\begin{eqnarray}
\label{gfu}
&& Z^{(g)}_V\left(v_1, z_1;\ldots; v_n, z_n; \mathcal T^{(g)} \right)   
\notag
\\
&& 
\qquad = \sum\limits_{k \ge 0}  \mathbb{P}_{k, n}^{(g)} \left(A^{(g_1)}_1, A^{(g_2)}_2\right) 
\cdot f_{V,k,n}(A)
\cdot \mathbb{O}_{k, n}^{(g)},   
\end{eqnarray}
where
 $\mathcal T^{(g)}=\left(\epsilon_1, \epsilon_2, z_1, z_2, \Omega^{(g_1)}, \Omega^{(g_1)} \right)$, 
are parameters of the genus $g$ $n$-point correlation function a sewn genus $g$ Riemann surface, 
$\mathbb P_{k,n}^{(g)}\left(A^{(g_1)}_1, A^{(g_2)}_2\right)$ is a polynomial of special infinite matrices 
$A^{(g_1)}_1$,  $A^{(g_2)}_2$, \cite{MT, TW}  
associated to the period matrices for Riemann surfaces ${\mathcal S}^{(g_1)}$ and 
${\mathcal S}^{(g_2)}$, 
$\mathbb{P}^{(g)}_{k, n}$, $\mathbb{O}^{(g)}_{k, n}$ are certain generalizations of classical elliptic functions
on higher genus Riemann surfaces depending on 
 arguments $(v_1, z_1; \ldots; v_n, z_n)$, and   
$f_{V,k,n}$ is a function of  the matrix $A=\left(I - A^{(g_1)}_1 \; A^{(g_2)}_2\right)$. 
Note that, using results of \cite{TZ, T}
 for the multiple-sewn sphere case, 
and as it is shown in \cite{TZ1}, the genus one case trace formulas can be obtain from the higher genus formulas 
(in particular, from genus two) 
 in the complex parameter $\rho$-sewing procedure \cite{Y}.  
\section{Construction of a foliation of the space of correlation functions 
over Riemann surfaces for a vertex operator algebra}
The construction of a vertex operator algebra $n$-point function of arbitrary genus gives us a 
hint how to define a special-type foliation 
of a space related to a vertex operator with the formal parameter 
associated to coordinates on a Riemann surface (formed from two lower genus surfaces) of genus $g$  
by means of vertex operator algebra correlation functions.  

 A $p$-dimensional 
foliation $\mathcal F$ of an $n$-dimensional 
 manifold $\mathcal M$ 
is a covering by a system of domains $\{U_i\}$ of $\mathcal M$ together with maps
\[
\phi_{i}: U_{i} \to \mathbb{C}^n, 
\]
such that for overlapping pairs $U_i$, $U_j$ the transition functions 
\[
\varphi_{ij} : {\mathbb C}^n \rightarrow {\mathbb C}^n, 
\]
defined by
\[
 \varphi_{ij}=\phi_{j} \phi_{i}^{-1}, 
\]
take the form 
\[
\varphi_{ij}(x,y)=\left(\varphi_{ij}^{1}(x), \varphi _{ij}^{2}(x,y)\right), 
\]
where $x$ denotes the first $n - p$ coordinates, and $y$ denotes the last $p$ co-ordinates. That is,
\[
{\begin{aligned}\varphi _{ij}^{1}:&\mathbb {C} ^{n-p}\to 
\mathbb {C} ^{n-p}, \\\varphi _{ij}^{2}:&\mathbb {C} ^{p}\to \mathbb {C}^{p}. \end{aligned}} 
\]

In this paper, we would like to foliate the space 
\[
\mathcal{M}_n= V^{\otimes^n} \times \mathcal{S}^{(g)},
\] 
where $V$ is a vertex operator algebra, and $\mathcal{S}^{(g)}$ is a Riemann surface of genus $g$. 
\begin{proposition}
The correlation functions \eqref{gfu} determine a foliation on the space 
$\mathcal{M}=\bigoplus_{n \ge 0} \mathcal{M}_n$. 
\end{proposition}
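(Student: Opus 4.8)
The plan is to read the factorised form of the genus $g$ character \eqref{gfu} as a statement about how the two kinds of data entering $\mathcal{M}_n = V^{\otimes n}\times\mathcal{S}^{(g)}$ enter the correlation functions: on one side the vertex operator algebra states $(v_1,\ldots,v_n)$, and on the other the geometric data (the insertion points $z_i$ together with the moduli encoded in the matrices $A^{(g_1)}_1$, $A^{(g_2)}_2$). I would take the transverse coordinates $x$ (the first $n-p$ coordinates) to be the geometric ones and the leaf coordinates $y$ (the last $p$ coordinates) to be the linear coordinates on the fibre $V^{\otimes n}$, so that the leaves of $\mathcal F$ are the fibres $V^{\otimes n}\times\{s\}$ over a fixed geometric configuration $s$. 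The whole point is to show that the natural atlas on $\mathcal M$ has transition functions whose geometric block is autonomous, i.e. of the required form $\varphi^1_{ij}(x)$, while the state block $\varphi^2_{ij}(x,y)$ is permitted to couple to the geometry.

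First I would build the atlas. On $V^{\otimes n}$ the grading $V=\bigoplus_k V_{(k)}$ with $\dim V_{(k)}<\infty$ supplies linear coordinates in a fixed homogeneous basis (working degreewise over the grading, or in a pro-finite-dimensional category, to make sense of the infinite matrices $A^{(g_1)}_1,A^{(g_2)}_2$ and of the sewing sum over $u\in V$). On $\mathcal S^{(g)}$ I would cover the surface by charts coming from the sewing construction: away from the sewn annulus the local holomorphic coordinates of $\mathcal S^{(g_1)}$ and $\mathcal S^{(g_2)}$, and across the annulus the coordinate glued by the sewing parameter $\epsilon$. The product charts $U_i=U_i^{V}\times U_i^{\mathcal S}$ then cover each $\mathcal M_n$, hence $\mathcal M=\bigoplus_{n\ge 0}\mathcal M_n$.

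The key step is the analysis of the transition functions, where two kinds of overlap occur. For two local coordinates $z\mapsto w=w(z)$ on the same surface, the geometric coordinate transforms by $\varphi^1_{ij}:z\mapsto w(z)$, depending on $x$ alone, whereas a state inserted at that point is carried along by the conformal action built from $w'(z)^{L(0)}$ and the higher Virasoro modes; this is exactly the content of the $q_z^{L(0)}$ conjugations appearing in the recursion, and it produces a state transformation $\varphi^2_{ij}(x,y)$ depending on both the old state $y$ and the coordinate change $x$. For two different sewing decompositions of the same genus $g$ surface, the invariance of \eqref{gfu} forces the matrices $A^{(g_1)}_1,A^{(g_2)}_2$ (and hence the factor $\mathbb P^{(g)}_{k,n}\,f_{V,k,n}(I-A^{(g_1)}_1A^{(g_2)}_2)$) to transform among themselves, again without reference to $y$, while $\mathbb O^{(g)}_{k,n}$ absorbs the induced state transformation. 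In both cases the transition function acquires the triangular shape $\varphi_{ij}(x,y)=(\varphi^1_{ij}(x),\varphi^2_{ij}(x,y))$ demanded by the definition, so the fibres $V^{\otimes n}\times\{s\}$ integrate to the leaves of a $p$-dimensional foliation, with $p$ counting the fibre directions taken degreewise.

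The hard part will be precisely the autonomy of the geometric block: one must check that the combination $\mathbb P^{(g)}_{k,n}(A^{(g_1)}_1,A^{(g_2)}_2)\,f_{V,k,n}(I-A^{(g_1)}_1A^{(g_2)}_2)$ transforms under change of sewing and of local coordinate without feeding back the state coordinate $y$, i.e. that the period-matrix data $A^{(g_1)}_1,A^{(g_2)}_2$ close up as a geometric subsystem; this is where the particular product structure of each summand in \eqref{gfu} is indispensable, since it is what separates the $A$-dependent factor from the $(v_i,z_i)$-dependent factor $\mathbb O^{(g)}_{k,n}$. A secondary, technical obstacle is the infinite dimensionality of $V$: the fibre coordinate $y$ and the matrices $A^{(g_1)}_1,A^{(g_2)}_2$ live in infinite-dimensional spaces, so the foliation must be organised over the grading and the convergence of the sewing sum and of the sum over $k$ in \eqref{gfu} on the chart overlaps must be controlled before the transition maps can be declared holomorphic.
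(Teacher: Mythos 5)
Your construction is a legitimate way to read \eqref{gfu}, but it is a genuinely different foliation from the one the paper builds, so the two arguments are worth contrasting. You foliate $\mathcal{M}_n=V^{\otimes n}\times\mathcal{S}^{(g)}$ by the fibres $V^{\otimes n}\times\{s\}$ over a fixed geometric configuration: the transverse coordinates are the geometric data (insertion points and the moduli encoded in $A^{(g_1)}_1,A^{(g_2)}_2$), the leaf coordinates are the state directions, and the triangular shape of the transition maps is extracted from the way a change of local coordinate acts on states through $w'(z)^{L(0)}$ and higher Virasoro modes while the coordinate change itself is autonomous. The paper instead splits the $n$ coordinates according to the \emph{sewing decomposition}: $p$ of the insertion points lie on the $\mathcal{S}^{(g_1)}$-part and the rest on the $\mathcal{S}^{(g_2)}$-part, the chart maps $\phi_i:V^{\otimes p}\times\mathcal{S}^{(g_1)}\to\mathbb{C}^p$ are defined by running the Zhu-type recursion that reduces the $p$-point character to the one-point character anchored at a chosen point $x_i$ in the domain $U_i$, and the transition functions come out block-diagonal, $\varphi_{ij}(x,y)=\bigl(\varphi^{(g_1)}_{ij}(x),\varphi^{(g_2)}_{ij}(y)\bigr)$, which is a product-type special case of the required triangular form. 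What your route buys is an intrinsic leaf/transverse split that does not depend on the choice of sewing, together with an honest identification of the two real obstacles (autonomy of the $A^{(g_1)}_1,A^{(g_2)}_2$-block under change of sewing and of local coordinate, and the infinite-dimensionality of the fibre, which you propose to handle degreewise); what the paper's route buys is a foliation whose leaves reflect the factorized structure of \eqref{gfu} itself, so that the recursion procedure becomes a relation among leaves, which is the feature the paper exploits afterwards. Be aware, however, that neither argument as written closes the decisive verification: in your plan the autonomy of the geometric block is flagged as the ``hard part'' but not established, and in the paper the invertibility of $\phi_j$ --- a map from the infinite-dimensional space $V^{\otimes p}\times\mathcal{S}^{(g_1)}$ onto $\mathbb{C}^p$ --- is asserted rather than proved; if you pursue your version, the degreewise organization over the grading of $V$ and the convergence of the sums in \eqref{gfu} on chart overlaps are exactly the points you would need to make precise.
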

\begin{proof}
We consider a system of charts $\{U_m\}$, $m\in \mathbb N$, covering the Riemann surface part  
of the space $\mathcal{M}_n$ together with a filtration of the space $V^{{\otimes}^n}$ with respect 
to $z_i$, $i=1, \ldots, n$ belonging to $\{U_m\}$. This defined an infinite-dimensional analog of 
charts for $\mathcal M$. 
Suppose certain points $x_i$, $1 \le i \le p \le n$ are situated on $\mathcal{S}^{(g_1)}$-part and 
other $x_i$, $p+1 \le i \le n$ are on $\mathcal{S}^{(g_2)}$-part of the Riemann surface $\mathcal{S}^{(g)}$. 
On the Riemann surface $\mathcal{S}^{(g_1)}$ of genus $g_1$,  
let us choose a particular point $x_i$, $1 \le i \le p \le n$
 with associated local coordinate $z_i$ around $x_i$ which belongs to   
the domain $U_i$ of the system $\{U_m\}$. 
For another point $x_j$, $1 \le j\ne i \le p$ on $\mathcal{S}^{(g_1)}$-part we associate a domain  
$U_j$ intersecting with $U_i$. 
It can be always done since we can move points $x_i$, $1 \le i \le p$ around $\mathcal{S}^{(g_1)}$-part 
of the Riemann surface $\mathcal{S}^{(g)}$. 

Now let us compute a $p \le n$-point $V$-correlation functions $Z^{(g_1)}(v_1,z_1; \ldots ; v_p, z_p)$ by means of 
  the recursion procedure described above reducing $p$-point correlation functions to a one-point correlation function  
$Z^{(g_1)}(v_i, z_i)$ associated to our chosen point $x_i$ in the domain $U_i$.
Due to properties of vertex operators, the vertex algebra elements $(v_1, \ldots , v_p)$ can be chosen so that 
the expansion of the correlation function $Z^{(g_1)}(v_1,z_1; \ldots ; v_p,z_p)$ 
has a dimension $p$ polynomial nature, \cite{knizka}.  
The procedure of computation of $Z^{(g_1)}(v_1,z_1; \ldots ; v_p, z_p)$ by the 
 reduction to one-point correlation function  
$Z^{(g_1)}(v_i,z_i)$ defines a map
\[
\phi_i: V^{\otimes^p} \times  \mathcal{S}^{(g_1)} \rightarrow \mathbb{C}^p. 
\]
The result of computation of $Z^{(g_1)}(v_1,z_1; \ldots ; v_p,z_p)$ can 
 re-written to reduce to another one-point function  
$Z^{(g_1)}(v_j, z_j)$ associated to another coordinate $z_j$ around a point $x_j$ in the domain $U_j$.   
Thus, we can define the inverse map 
\[
\phi_j^{-1}: \mathbb{C}^p \rightarrow V^{\otimes^p} \times \mathcal{S}^{(g_1)}.   
\]
For any set $x$ of pairs of non-coinciding points among $x_1, \ldots, x_p$ on $\mathcal{S}^{(g_1)}$ we then define 
 the function for the intersecting domains $U_i$ and $U_j$ on the Riemann surface 
$\mathcal{S}^{(g_1)}$   
\[
 \varphi^{(g_1)}_{ij}(x)=\phi_{i} \phi_{j}^{-1}(x).  
\]
Exactly similar procedure is then performed for any set of $y$ non-coinciding points among $x_{p+1}, \ldots, x_n$  
 on the Riemann surface $\mathcal{S}^{(g_2)}$ of genus $g_2$ to define $\varphi^{(g_2)}_{ij}(y)$.  
Then the transition function is given by the map 
\[
\varphi_{ij}: \mathbb{C}^n \rightarrow \mathbb{C}^n, 
\]
\[
\varphi_{ij}(x,y)=\left(\varphi_{ij}^{(g_1)}(x), \varphi_{ij}^{(g_2)}(y)\right).  
\]
As a result, we have defined a foliation of the space $\mathcal{M}_n$ by means of correlation functions for corresponding 
 vertex operator algebras with the formal parameter associated to a coordinate on a Riemann surface. 
The total space $\mathcal{M}=\bigoplus_{n \ge 0} \mathcal{M}_n$ is foliated similar to $\mathcal{M}_n$. 
\end{proof}
The general situation is more complicated. Namely, we have to vary $n$, $p$, the set of vertex operator elements 
$\left\{ v_i\right\}$, $i=1, \ldots, n$, and $g_1$, $g_2$ with $g=g_1+g_2$  
(note also, that another parameter  
can be provided by the type of grading of the vertex operator algebra).

\section{Further directions}
The recursion procedure \cite{Zhu} 
 plays a fundamental role in the theory of correlation functions for 
 vertex operator algebras. It provides relations between $n$- and $n-1$-point correlation functions. 
In our foliation picture, the recursion procedure brings about relations among leaves of foliation. 
We work in the  
approach of formulation and computation of cohomologies of vertex operator algebras.   
Taking into account the above definitions and construction, 
we would like to develop a theory of 
characteristic classes for vertex operator algebras, and, in particular, for the space 
$\mathcal{M} = \bigoplus_{n=0}^{\infty} V^{\otimes^n} \times  \mathcal{S}^{(g)}$. 
This can have a relation with Losik's work \cite{LosikArxiv} 
proposing a new framework for singular spaces and 
new kind of characteristic classes.  
Losik defines a smooth structure on the leaf space
$M/\mathcal{F}$ of a foliation $\mathfrak{F}$ of codimension $n$ on
a smooth manifold $M$ that allows to apply to $M/\mathcal{F}$ the
same techniques as to smooth manifolds.
Losik 
defined the characteristic classes for a
foliation as  elements of the cohomology of certain bundles over the
leaf space $M/\mathcal{F}$. 
Similar to Losik's theory, we use certain bundles (of correlation functions) over a foliated space. 
Relations to \cite{BGG} on Reeb foliations modified Godbillon-Vey class and can be also considered. 

On the other hand, we would like to apply methodology of vertex algebras in order to 
complete Losik's theory of characteristic classes \cite{LosikArxiv}.  
One can mention a possibility to derive differential equations  
for correlation functions on 
separate leaves of foliation. Such equations are derived for various genera and can be used in frames 
of Vinogradov theory \cite{vinogradov}. 
The structure of foliation (in our sense) can be also studied from the automorphic function theory point of view. 
Since on separate leaves one proves automorphic properties of correlation functions, on can think about "global" automorphic 
properties for the whole foliation. 

Since we consider multipoint correlation function for vertex algebras on Riemann surfaces of arbitrary genus, 
there exists also a connection to 
Krichever-Novikov type algebras \cite{KN1, KN2, KN2, Schl0, Schl1, Schl2 }. These 
  are generalizations of the Witt, Virasoro, affine Lie algebras.  
In particular, one is able to use the structure of Krichever-Novikov type algebras (as higher-genus generalizations of 
algebras related to vertex algebras) to study foliated spaces of correlation functions introduced in this paper.
The construction of $n$-point functions on genus $g$ sewn Riemann surfaces 
can be used for introduction and computation of cohomology of Krichever-Novikov type algebras in appropriate setup. 
We plan to fully shed light on this subject in a future paper. 

\section*{Acknowledgments}
The author would also like to thank a refereee provided comments to the first version of the manuscript. 



\begin{thebibliography}{99}

\bibitem{BGG} 
Bazaikin, Y.V., Galaev, A.S., Gumenyuk, P.  
Non-diffeomorphic Reeb foliations and modified Godbillon-Vey class. Math. Z. (2021).  
https://doi.org/10.1007/s00209-021-02828-1

\bibitem 
{FBZ} 
 Frenkel, E.; Ben-Zvi, D. Vertex algebras and algebraic curves. Mathematical Surveys and Monographs,
 88. American Mathematical Society, Providence, RI, 2001. xii+348 pp. 


\bibitem{knizka} {\sc Francesco, Ph., Mathieu, P. and Senechal, D.} Conformal field theory. 
 Graduate Texts in Contemporary Physics. 1997. 

\bibitem
{K} {\sc Kac, V.}:  
 {\it Vertex algebras for beginners}, 
University Lecture Series \textbf{10}, AMS, Providence 1998.


\bibitem{KN1} 
Krichever I.M., S. P. Novikov, 
Algebras of Virasoro type, Riemann surfaces and the structures of soliton theory (Russian),
 Funktsional. Anal, i Prilozhen. 21 (1987), no. 2, 46-63. 
\bibitem{KN2} 
Krichever I.M., S. P. Novikov, 
 Algebras of Virasoro type, Riemann surfaces and strings in Minkowski space (Russian),
 Funktsional. Anal, i Prilozhen. 21 (1987), no. 4, 47-61, 96, 

\bibitem{KN3} 
Krichever I.M., S. P. Novikov, Algebras of Virasoro type, the energy-momentum tensor, and operator expansions 
on Riemann surfaces (Russian), Funktsional. Anal, i Prilozhen. 23 (1989), no. 1, 24-40, 
English transl., Funct. Anal. Appl. 23 (1989), no. 1, 19-33. 

\bibitem
{L} {\sc  Lang, S.}:  Elliptic functions. With an appendix by J. Tate. Second edition. Graduate Texts in
Mathematics, 112. New York: Springer-Verlag, 1987


\bibitem
{LosikArxiv} {\sc  Losik, M. V.}: Orbit spaces and leaf spaces of foliations as generalized manifolds, 
  arXiv:1501.04993. 

\bibitem{MT} G. Mason, M. P. Tuite, On genus two Riemann surfaces formed from sewn tori, 
Communications in Mathematical Physics, v 270, 587 (2007).  


\bibitem
{Schl0} Schlichenmaier, M. 
Krichever-Novikov type algebras. An introduction.
 Lie algebras, Lie superalgebras, vertex algebras and related topics, 181-220,
Proc. Sympos. Pure Math., 92, Amer. Math. Soc., Providence, RI, 2016.

\bibitem
{Schl1} 
 Schlichenmaier, M., Krichever-Novikov algebras for more than two points, Lett. Math. Phys.
19(1990), 15-165.

\bibitem
{Schl2} 
Krichever-Novikov algebras for more than two points: explicit generators, Lett. Math.
Phys. 19(1990), 327-336.



\bibitem
{Sche}Scheithauer, N. 
Vertex algebras, Lie algebras and superstrings, 
J.Alg. \textbf{200} (1998) 363--403.

\bibitem{T} M. P. Tuite,  
The Heisenberg generalized vertex operator algebra on a Riemann surface, arXiv:2006.01255. 

\bibitem{TG} M. P. Tuite, T. Gilroy, 
Genus two Zhu theory for vertex operator algebras, arXiv:1511.07664.  

\bibitem{TW} M. P. Tuite, M. Welby, General genus Zhu recursion for vertex operator algebras, arXiv:1911.06596  

\bibitem{TZ}  M.P. Tuite, A. Zuevsky, The bosonic vertex operator algebra on a genus g Riemann surface 
RIMS Kokyuroko 1756 81-93 (2011). 

\bibitem{TZ0}  M.P. Tuite, A. Zuevsky, 
Genus two partition and correlation functions for fermionic vertex operator sueralgebras I,  
Commun. Math. Phys. 306: 419--447, 2011. 


\bibitem{TZ1}  M.P. Tuite, A. Zuevsky, 
Genus two partition and correlation functions for fermionic vertex operator sueralgebras II,  
arXiv:1308.2441.   


\bibitem{vinogradov} {\sc Vinogradov, A. M.}  
Cohomological analysis of partial differential equations and secondary calculus. 
 Translations of Mathematical Monographs, 204. 
American Mathematical Society, Providence, RI, 2001. xvi+247. 

\bibitem
{Y} {\sc Yamada, A.}: Precise variational formulas for abelian
differentials. Kodai Math.J. \textbf{3} (1980) 114--143.

\bibitem
{Zhu} {\sc Y. Zhu}:  
Modular invariance of characters of vertex operator algebras. 
J. Amer. Math. Soc. 9(1), 237--302. 
  
\end{thebibliography}
\end{document}